\documentclass[a4paper]{scrartcl}
\usepackage[utf8]{inputenc}
\usepackage{amsfonts,amsthm,amssymb,amsmath}
\usepackage[inline]{enumitem}
\usepackage{color}
\usepackage{graphicx} 
\usepackage{authblk}

\newtheorem{thm}{Theorem}

\theoremstyle{remark}

\definecolor{mypink1}{RGB}{219, 48, 122}

\title{Breaking small automorphisms of graphs of arbitrary cardinality}
\author{Marcin Stawiski \\ stawiski@agh.edu.pl}

\affil{AGH University of Krakow,\\ Faculty of Applied Mathematics, \protect\\al. Mickiewicza 30, 30-059 Krakow, Poland}

\begin{document}
\maketitle
\begin{abstract}

We say that an edge colouring $c$ of a graph \emph{preserves} an automorphism $\varphi$ if $\varphi$ maps each edge to an edge of the same colour. Otherwise, we say that $c$ \emph{breaks} $\varphi$. We call an automorphism of a graph \emph{small} if it moves some vertex to its neighbour. We study the edge colourings of graphs that break every small automorphism. Kalinowski, Pilśniak, and Woźniak proved that three colours are enough for such a colouring to exist for every finite graph without isolated edges. They conjectured that two colours are enough for every finite connected graph on at least six vertices. We confirm this conjecture in its more general version, namely for connected finite and infinite graphs of arbitrary cardinality.

\bigskip\noindent \textbf{Keywords}: distinguishing index, distinguishing colourings, asymmetric colourings, symmetry breaking, infinite graphs

\noindent {\textbf{\small Mathematics Subject Classifications}: 05C15, 05C78}
\end{abstract}

\section{Introduction}

Babai \cite{BAB} in 1977 started investigating vertex colourings of a graph $G$ that breaks all non-trivial automorphisms of $G$. He called such a colouring \emph{asymmetric}, although today the more popular term is \emph{distinguishing}. This concept plays a vital role in his quasi-polynomial algorithm for the Graph Isomorphism Problem \cite{babaiisomorphism}. 

From now on, all the considered colourings are edge colourings. The edge variant of distinguishing colourings was introduced by Kalinowski and Pilśniak \cite{KP}. The minimum number of colours needed in a colouring which breaks every non-trivial automorphism of a graph $G$ is called its \emph{distinguishing index}, and it is denoted by $D'(G)$. Pilśniak \cite{P} characterized the finite connected graphs $G$ whose distinguishing number is equal to $\Delta(G)+1$ or $\Delta(G)$. Pilśniak and Stawiski \cite{PS} proved the optimal bound of $\Delta(G)-1$ for infinite connected graph $G$ which is not the double ray.  Kalinowski, Pil\'sniak and Wo{\'z}niak \cite{small} started the investigation of the following problem. Given any graph, we want to minimize the number of colours needed in a colouring of a given graph $G$ to break every small automorphism of $G$. They denote this number by $D'_s(G)$ and call it the \emph{small distinguishing index} of $G$. 

The problem of breaking (small) automorphisms is connected with the concept of general distinguishing (adjacent) vertices by an edge colouring. In general formulation, we have some set of properties $\Psi$ of a coloured rooted graph $(G,v,c)$, where $c$ is a colouring and $v$ is the root of $(G,v,c)$. Let $\Psi(G,v,c)$ be the set of these properties in $\Psi$ which are satisfied for $(G,v,c)$. We want to have a colouring $c$ such that for every pair of (adjacent) vertices $x$ and $y$ of $G$ the sets $\Psi(G,x,c)$ and $\Psi(G,y,c)$ are different. For example, consider only these colourings that use only natural numbers. We may want to distinguish each pair of (adjacent) vertices $x$ and $y$ of a locally finite graph by the sum of the colours on its adjacent edges. In this case $\Psi(G,v,c)=\{\Psi_n(G,v,c)\colon n \in \mathbb{N}\}$ where $\Psi_n(G,v,c)$ means that the sum of the colours on the edges adjacent to $v$ is equal to $n$. The minimum number $k$ such that the colours from the set $\{1,2,\dots\}$ suffice for the colourings that distinguish each pair of vertices is called \emph{the irregularity strength} of a graph, and it is bounded from below by the distinguishing index. This concept was introduced by Faudree,  Schelp, Jacobson, and Lehel \cite{irregular} in 1989. The local version of this problem is connected to the so-called 1-2-3 Conjecture by Karoński, \L{}uczak and Thomason \cite{KLT}, which states that for each finite graph without component isomorphic to $K_2$ each pair of adjacent vertices can be distinguished in some colouring using only colours from the set $\{1,2,3\}$. Keusch \cite{Keusch} recently proved this conjecture. Furthermore, the result of Keusch was extended to infinite graphs by Stawiski \cite{StawiskixD}. There are many examples of such general distinguishing colourings: antimagic labellings, distinguishing by the palettes, sets, multisets, or products instead of the sums. Similarly, colours required for such colourings are bounded from below by the distinguishing index.

Kalinowski, Pil\'sniak and Wo{\'z}niak \cite{small} proved that for every finite graph $G$ without isolated edges $D'_s(G)\leq 3$, and they conjectured that $D'_s(G)\leq 2$ for each connected finite graph on at least six vertices. This theorem and the conjecture were recently extended to the list version by Kwaśny and Stawiski \cite{KS:smoll}  for locally finite graphs. 

Our main theorem is the following:

\begin{thm}\label{thm:main}
    Let $G$ be a finite or infinite graph of order at least $6$. Then $D'_s(G)\leq 2$.
\end{thm}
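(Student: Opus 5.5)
The theorem as stated omits connectivity, but the abstract confirms the conjecture for connected graphs. I will prove it for connected $G$ of order at least $6$, which is the intended reading.

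\textbf{Overall approach.} I would fix a root $r$ and a spanning tree $T$ built by breadth-first search; for uncountable $G$ I would use a well-ordering of $V(G)$ and transfinite recursion, so every vertex has a well-defined distance layer $d(r,\cdot)$. An automorphism $\varphi$ is small exactly when there is a vertex $v$ with $\varphi(v)\sim v$. The aim is therefore a $2$-colouring $c$ under which every pair of adjacent vertices $u,v$ is distinguished by some colour-invariant property. In other words, there should be no colour-preserving automorphism sending $u$ to $v$. It suffices that every vertex $v$ carries a local colour signature that differs from the signature of each of its neighbours and that is invariant under colour-preserving automorphisms.

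\textbf{Construction.} I would colour greedily layer by layer, processing the vertices of each layer in the well-order. At vertex $v$ in layer $n$, I colour the edges from $v$ to its not-yet-processed neighbours in layer $n+1$, together with the edges inside layer $n$ at $v$. The key step is a ``degree-by-colour'' separation. I would choose the red degree of $v$ so that it differs from the red degree of every already-processed neighbour. Edges going up to layer $n+1$ remain uncoloured when $v$ is processed, so the red degrees of the neighbours $w$ of $v$ in layer $n+1$ can still be adjusted later. This gives enough freedom whenever $v$ has at least two forward edges.

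Red degree alone can fail in two ways: it may be infinite, and there may be too few free edges. In those cases I would separate $v$ from its neighbours by the cardinality of red versus blue neighbours, using infinite cardinals, or by a secondary invariant. Examples of such invariants are the multiset of red degrees of neighbours, or the presence of a red edge whose endpoints both have red degree $1$, which marks a pendant-like structure. For infinite-degree vertices with $\kappa$ forward edges, I would give $v$ exactly $\mu$ red edges, with $\mu$ a finite number or a cardinal chosen to differ from the red degrees of its processed neighbours. Forward neighbours of $v$ are processed later and can avoid $v$'s value, since each vertex has fewer than $2$ constraints from each already-fixed neighbour. Since the invariant is preserved by colour-preserving automorphisms and adjacent vertices get distinct values, no small automorphism is preserved.

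\textbf{Main obstacle.} The real difficulty is vertices with too little freedom. These are leaves of $T$ whose neighbours were all processed earlier, vertices with a single forward edge, and low-degree configurations such as long paths, cycles and near-regular small pieces. They explain the order $\ge 6$ hypothesis: $K_3$, $C_4$, $C_5$, $K_4$ and similar graphs fail. I would handle them with a separate local argument. Leaves and degree-$2$ chains would be distinguished using the colour pattern along the whole maximal path (a red--blue pattern distinguishing its two ends). This needs a case analysis for the situation where the whole graph is a path or cycle, which requires $\ge 6$ vertices to 2-colour asymmetrically on neighbouring shifts and reflections. Small pieces hanging off a vertex would be handled by recolouring inside a bounded neighbourhood. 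I expect this low-degree case analysis, together with keeping the greedy invariant consistent in the transfinite recursion (limit stages must not create new conflicts), to be the bulk of the work.
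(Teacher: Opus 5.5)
Your reformulation is sound: if adjacent vertices receive distinct signatures that colour-preserving automorphisms respect, then every small automorphism is broken. The construction meant to achieve this, however, has a genuine gap, and the gap is not confined to low-degree pieces.

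When you reach $v$, every already-processed neighbour has all its edges coloured, so its red degree is fixed. The $f$ free edges at $v$ give only $f+1$ possible values of $r(v)$, while you must avoid one value per processed neighbour. Nothing bounds the number of processed neighbours by $f$, and neither ``at least two forward edges'' nor ``fewer than $2$ constraints from each neighbour'' addresses this count. In $K_{m,m}$ rooted on one side, the last-layer vertices have all $m$ neighbours processed and no free edge, whatever $m$ is.

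Worse, the target itself is unattainable. In $K_n$ all vertices are pairwise adjacent, so their red degrees would have to be exactly $0,1,\dots,n-1$. But the vertex of red degree $n-1$ is joined in red to the vertex of red degree $0$. In $K_{\aleph_1}$ the pairs (red degree, blue degree) take only countably many values, so they cannot separate $\aleph_1$ pairwise adjacent vertices at all.

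For complete graphs the requirement is exactly $D'(K_n)\le 2$, i.e.\ an asymmetric red spanning subgraph. That is a global property, which no layer-by-layer choice of degrees produces. Your ``secondary invariants'' are only named, never shown to be realisable. The small obstructions $K_3$, $C_4$, $C_5$, $K_4$, $K_5$ are highly symmetric rather than of low degree. Dense symmetric pieces of arbitrary cardinality are where the real difficulty lies, and your scheme has no mechanism for them.

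The idea you are missing is the reduction to automorphism orbits used in the paper. Suppose a colour-preserving automorphism maps $v$ to a neighbour $w$. Then $v$ and $w$ lie in one $\mathrm{Aut}(G)$-orbit $O$, and the automorphism stabilises the component $D$ of $G[O]$ containing the edge $vw$, acting non-trivially on it. These components are vertex-transitive, hence regular.

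So the known theorems ($D'\le 2$ for connected locally finite regular graphs of order at least $7$, and for regular graphs of infinite degree) give them distinguishing $2$-colourings. This disposes of all dense symmetric pieces at once, and the case of regular $G$ is delegated to the same theorems. Only a few small exceptional components receive almost distinguishing colourings. Their single remaining swap is broken by edges leaving the orbit: either through a common neighbour outside the orbit, or through a pink back edge placed during a transfinite induction over orbits ordered by distance from a fixed component $C$. This replaces your vertex-by-vertex signature bookkeeping entirely.
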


Our proof is novel in the field of breaking automorphisms by colouring, because in approach we do not intend to fix any vertex.

Note that the confirmation of 1-2-3 Conjecture implies the result of Kalinowski, Pilśniak, and Woźniak, but not the list version of Kwaśny and Stawiski nor our main theorem.

\section{Main theorem}

In this section, we prove Theorem \ref{thm:main}. In our proof, we use the following theorems.

\begin{thm}[Kwaśny, Stawiski, 2025+ \cite{KS:regular}]
    If $G$ is a connected locally finite regular graph of order at least $7$, then $D'(G)\leq 2$.
\end{thm}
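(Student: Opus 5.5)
The plan is to prove the statement by a rooted, breadth-first construction. We fix a root $r$, make $r$ recognisable in the colouring, and then extend the colouring layer by layer so that every automorphism preserving it fixes each layer pointwise. Since $G$ is connected and locally finite, $G$ is countable and every BFS layer $S_i=\{x\colon d(r,x)=i\}$ is finite. The same inductive procedure therefore handles the finite and the infinite case at once, with no compactness argument. Let $d$ be the degree of $G$.

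The case $d\le 2$ is immediate. If $d=2$, then $G$ is a cycle $C_n$ with $n\ge 7$ or the double ray. For a cycle, colour a path of length one with one colour and a non-symmetric arc pattern with the other. For the double ray, colour red the two edges $e_1$, $e_2$ of a $P_3$, plus one edge at distance $2$ from $e_2$ on one side only, and colour the remaining edges blue. A colour-preserving automorphism then fixes these red edges and hence everything.

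For $d\ge 3$ we proceed in three steps, using two colours, red and blue.

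\emph{Root gadget.} We colour all edges incident to $r$ red, together with a small asymmetric red configuration in $S_1\cup S_2$, for example a red path leaving one chosen neighbour of $r$. All other edges within distance $2$ of $r$ are blue. The order hypothesis $|V(G)|\ge 7$ is what guarantees enough room for this. We check that $r$ is then the unique vertex of red degree $d$ whose red neighbourhood has this exact shape, so every colour-preserving automorphism fixes $r$ and hence each $S_i$ setwise.

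\emph{Layer induction.} Suppose every vertex of $S_{\le i}$ is already fixed. Each $y\in S_{i+1}$ is then determined by the colour vector of its edges to the fixed set $S_i$. Two vertices of $S_{i+1}$ with the same neighbourhood in $S_i$ must therefore receive different colour vectors on those edges. The counting problem is the one in Pilśniak's spanning-tree argument. A vertex $x\in S_i$ may have up to $d-1$ neighbours in $S_{i+1}$ whose only neighbour in $S_i$ is $x$. Two colours on a single edge distinguish only two of them, so we also use the edges inside $S_{i+1}$ and the edges from $S_{i+1}$ to $S_{i+2}$. Concretely, we colour the edges inside $S_{i+1}$ so that these children receive different red degrees. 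With $d-1$ children and red degrees ranging over $0,\dots,d-1$ this is numerically possible, and we fix the choices at layer $i+1$ before colouring layer $i+2$.

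\emph{Consistency.} We must check that the colours chosen at deeper layers do not recreate a copy of the root gadget. To ensure this, we keep every non-root vertex at red degree at most $d-1$, or make its red neighbourhood shape different from the gadget's.

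The main obstacle is the layer induction for \emph{twin} classes: sets of vertices in $S_{i+1}$ with identical neighbourhoods in $S_i$ and possibly in $S_{i+1}\cup S_{i+2}$, as in $K_{d,d}$-like pieces. In such a class, edges inside the layer are absent or symmetric, and the degree-counting trick must be replaced by an argument using colours on edges going down to $S_{i+2}$. Regularity is what should make this work. A twin class of size $t$ has exactly $d$ common neighbours, which bounds $t$ and leaves $2^{d}\ge t$ patterns. The remaining danger is the few small regular graphs where the count is tight, such as $K_{3,3}$, $K_4$, $K_5$ and the cube. These are excluded, or checked by hand, using the hypothesis of order at least $7$. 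I would first attempt the twin-class case with this $2^d$ counting and then verify the tight small cases separately.
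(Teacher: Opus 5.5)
\textbf{There is a genuine gap: the layer induction, which is the whole difficulty of the theorem, is not carried out.} The paper does not prove this statement; it quotes it from Kwaśny and Stawiski and uses it as a black box. So your proposal has to stand on its own, and as written it does not.

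Your device for separating the up to $d-1$ children of a vertex $x\in S_i$ is to colour edges inside $S_{i+1}$ so that the children get distinct red degrees. This fails on several counts.

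\begin{itemize}
\item In every bipartite regular graph (regular trees, hypercubes, large-girth bipartite graphs) each layer $S_{i+1}$ is independent, so there are no such edges at all.
\item More fundamentally, the red degree of $y\in S_{i+1}$ also counts the edges from $y$ down to $S_{i+2}$, which you colour only at the next step. So fixing the choices at layer $i+1$ does not fix the invariant you rely on.
\item To repair this you would have to assign red-degree quotas to $S_{i+1}$ and realise them with the down-edges. But those down-edges are exactly the up-edges of $S_{i+2}$, whose colour vectors must separate the vertices of $S_{i+2}$ that have common parents. In a finite graph the last layer has no down-edges at all.
\item Once vertices of $S_{i+2}$ have several parents, these two families of constraints compete for the same edges. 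Nothing in the proposal shows they can be met simultaneously with two colours, and that simultaneous satisfiability is the real content of the theorem.
\end{itemize}

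The twin-class paragraph does not close the gap. The bound $2^d\ge t$ counts colour patterns on the edges to the common neighbours. However, part of that common neighbourhood lies in $S_{i+1}\cup S_{i+2}$ and is not yet fixed, so an automorphism may permute the twins together with those neighbours. What you actually need is a distinguishing colouring of a complete bipartite piece relative to a partially fixed boundary. Since $D'(K_{3,3})=3$, counting patterns alone cannot decide this.

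Several other points are left open.
\begin{itemize}
\item You give no reason why the tight configurations should occur only in finitely many small graphs.
\item The cube, which you list among them, has order $8$ and is covered by the statement. It does admit a distinguishing $2$-colouring, but that has to be checked; it does not follow from your scheme.
\item Uniqueness of the root gadget and the consistency condition are asserted, not proved, even though the layer step is itself forcing prescribed red degrees on all other vertices.
\item You never identify where $|V(G)|\ge 7$ is actually used.
\end{itemize}

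As your closing sentence acknowledges, the decisive step is a plan rather than an argument.
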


\begin{thm}[Stawiski, Wilson 2024 \cite{Wilson}]
    If $G$ is a regular graph of infinite degree, then $D'(G)\leq 2$.
\end{thm}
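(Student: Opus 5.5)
We will assume $G$ is connected (for the application to Theorem~\ref{thm:main} only connected components matter). Let $\kappa$ be the common degree. Then $|V(G)|=\kappa$, since a connected graph of infinite degree $\kappa$ has exactly $\kappa$ vertices. The overall strategy is to fix one root vertex $r$ by a local colour invariant and then extend fixedness outward by transfinite induction. At each stage every automorphism preserving the colouring must fix a growing set $F$ of vertices pointwise. A vertex outside $F$ is then identified by the colour pattern of its edges into $F$.

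\textbf{Step 1: fixing the root.} Choose $r$ and make it the unique vertex of blue degree exactly $1$. Colour one edge at $r$ blue and all remaining edges at $r$ red. Throughout the construction we then maintain the invariant that every other vertex gets blue degree $0$, or at least $2$, or $\kappa$. This is easy to arrange because every vertex has $\kappa$ edges still free to recolour. In the same way we make the unique blue neighbour $r'$ of $r$ recognisable, for example by giving it blue degree $\kappa$.

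\textbf{Step 2: fixing the rest.} Let $(v_\alpha)_{\alpha<\kappa}$ be a well-ordering of $V(G)$. We build, by transfinite recursion, increasing sets $F_\alpha$ of size $<\kappa$ (or of size $|\alpha|+\aleph_0$) and a partial colouring $c_\alpha$ of the edges meeting $F_\alpha$. The requirement is that the vertices adjacent to $F_\alpha$ but outside it have pairwise distinct \emph{traces}, namely the restrictions of $c_\alpha$ to their edges into $F_\alpha$. This makes them distinguishable by colour-preserving automorphisms fixing $F_\alpha$. At a successor step we add the least vertex not yet in the set, together with a countable closure of it. On the new edges from $F_\alpha$ to outside vertices we then choose colours so that the new traces differ pairwise and differ from all old traces. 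At limit stages we take unions. A counting argument makes the step feasible: each vertex $x\in F_\alpha$ has $\kappa$ neighbours but only $|F_\alpha|<\kappa$ of them have already been constrained. Assigning each new outside neighbour a distinct subset pattern therefore uses $2^{|F_\alpha|}$ available patterns, while each vertex of $F_\alpha$ retains $\kappa$ unconstrained edges.

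\textbf{Step 3: conclusion, and the main obstacle.} At the end, all edges not yet coloured are coloured red, with care taken to preserve the blue-degree invariant of Step 1. Every colour-preserving automorphism fixes $r$ and, inductively, each $F_\alpha$, hence fixes every vertex.

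The delicate point is Step 2 when $\kappa$ is singular, or more generally when a vertex has fewer than $\kappa$ neighbours in all $F_\alpha$ taken together before it is absorbed. There, distinct traces must be realised while only boundedly many edges into the fixed part are available. My first attempt would be to strengthen the recursion so that every vertex outside $F_\alpha$ that is adjacent to $F_\alpha$ gets an infinite trace. This is done by interleaving: whenever a vertex is first touched, it is immediately connected to countably many earlier fixed vertices via disjoint paths, and these are added to $F$. Infinitely many distinguishing bits are then always available. If this fails for singular $\kappa$, the fallback is to split into regular cofinal blocks and run the argument blockwise.
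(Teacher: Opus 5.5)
The paper does not prove this statement; it quotes it from Stawiski and Wilson, so your argument has to stand on its own. Restricting to connected $G$ is legitimate, and in fact necessary: more than $2^\kappa$ disjoint copies of $K_\kappa$ admit no distinguishing $2$-colouring. The genuine gap is in Step~2.

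Your invariant asks that all vertices of $N(F_\alpha)\setminus F_\alpha$ have pairwise distinct traces. But a single $x\in F_\alpha$ already has $\kappa$ neighbours outside $F_\alpha$. A trace is a $2$-coloured partial function on $F_\alpha$, so there are at most $3^{|F_\alpha|}$ traces. Your counting sentence names $2^{|F_\alpha|}$ patterns but never compares this number with $\kappa$, and the comparison fails:
\begin{itemize}
\item for $\kappa=\aleph_0$, either $F_\alpha$ is finite, giving finitely many traces, or $|F_\alpha|=\kappa$, and then the bound $|F_\alpha|<\kappa$ on constrained edges that you rely on is lost;
\item for a strong limit $\kappa$ such as $\beth_\omega$, one has $2^{|F_\alpha|}<\kappa$ at every stage.
\end{itemize}
So the difficulty is not specific to singular $\kappa$; note that $\aleph_0$ is regular. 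The bookkeeping is also inconsistent. If $c_\alpha$ colours every edge meeting $F_\alpha$, no freedom is left at $F_\alpha$. If it does not, traces are not final, and later colours can make two of them coincide.

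Weakening the invariant to ``each newly added vertex has a trace unique among all vertices outside $F_\alpha$'' does not help. The $\kappa$-regular tree $T_\kappa$ shows this, even where cardinal arithmetic is harmless, say $\kappa=\aleph_1$.
\begin{itemize}
\item Let $x$ be fixed, and let $Y$ be a set of $\kappa$ children of $x$ whose edges to $x$ share a colour.
\item For $y\in Y$, the branch $B_y$ below $y$ meets the rest of the tree only in the edge $xy$. While $B_y$ is disjoint from the fixed set, each vertex of $B_y\setminus\{y\}$ has empty trace. Meanwhile $y$ has the same one-edge trace as every other untouched member of $Y$.
\item Fewer than $\kappa$ branches are touched at any stage. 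Hence no vertex of an untouched branch can ever be added with a unique trace, contrary to what your successor step promises.
\item Your interleaving remedy is unavailable too, because every path from $y$ to the fixed set passes through $x$.
\end{itemize}
In $T_\kappa$ the children of a vertex can be told apart only by the colourings of the branches below them, i.e.\ by edges pointing away from the fixed set.

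That is the missing idea. You need a way to give the $\kappa$ neighbours of a fixed vertex pairwise distinct labels, invariant under colour-preserving automorphisms and encoded in the colouring of their own free edges, rather than identifying them through edges back into a smaller fixed set. The fallback via ``regular cofinal blocks'' does not address this. The Step~1 invariant (no vertex other than $r$ has blue degree $1$) is also never reconciled with the trace demands and the final red fill.
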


We say that a colouring $c$ of a graph is \emph{almost distinguishing} if there exists a pair of vertices $x$ and $y$ such that every non-trivial automorphism preserving $c$ swaps $x$ and $y$. Note that the only regular graphs which do not admit a distinguishing colouring have two non-isomorphic almost distinguishing colouring. 

Now we can proceed with the proof of Theorem \ref{thm:main}.

\begin{proof}
We can assume that $G$ is not regular. We check if there exists a vertex $x$ such that the components of its orbit $X$ (with respect to the group of automorphisms of $G$) admit a distinguishing colouring using at most $2$ colours. Otherwise, we choose a vertex $x$ in an arbitrary way. 

We colour the component $C$ of $X$ containing $x$ with a distinguishing colouring if such exists or with an almost distinguishing colouring if not, and all the other components in $X$ with a distinguishing colouring or an almost distinguishing colouring not isomorphic to the one chosen for $C$. Moreover, we colour all the components in every orbit of each remaining vertex with a distinguishing or an almost distinguishing colouring.  
We consider few possible cases as to decide what we do in the next step:

\textbf{Case I}: The partial colouring defined so far breaks every automorphism of $C$. In this case, we just proceed to the last step, which we describe later.

For all other cases assume that $C$ is not coloured with a distinguishing colouring, and there exists a vertex $y$ such that each small automorphism of $C$ preserving the current  colouring of $C$ swaps $x$ and $y$.

\textbf{Case II}: There exists a common neighbour $v$ of $x$ and $y$ outside of $X$. We colour $xv$ with pink and $yv$ with blue for one such $v$ and we make sure that in the last step, which we describe later, all the edges between $y$ and a common neighbour of $x$ and $y$ outside of $X$ get colour blue. We can proceed to the last step.

\textbf{Case III}: There is no common neighbour of $x$ and $y$ outside of $C$. Let $x'$ be some neighbour of $x$ outside of $C$ which is mapped to a neighbour $y'$ of $y$. If there are such $x'$ and $y'$ which lie in the same component of their orbit, then we colour each such edge $xx'$ with pink and each such edge $xy'$ with blue. Assume that it is not the case i.e. each such $x'$ and $y'$ lie in different components of their orbit. In this case it is enough that the component $X'$ containing $x'$ and the component $Y'$ containing $y'$ are coloured with non-isomorphic almost distinguishing colourings. In all these possible subcases $x$ and $y$ are distinguished from each other.

Now, we describe the last step, in which the remaining edges are coloured. We choose some enumeration $\mathcal{A} = (A_0, A_1, \dots)$ of orbits with respect to the group of automorphisms fixing setwise $C$ such that for every $i,j$ if $i<j$, then the vertices of $A_i$ have less or equal distance to $C$ than the vertices of $A_j$. In particular, $A_0=C$.  We refer to the edges between $a_i\in A_j$ and $a_j\in A_i$ as the \emph{back edges} of $a_i$ if $i<j$. Note that the set $\mathcal{A}$ may be of arbitrary cardinality. We proceed with a (transfinite) induction on $i$. In step $i$, we colour all the back edges of vertices in $A_i$ except for $i=1$, because they may be some edges incident to $x$ or $y$ that are already coloured. If a component $D$ of $A_i$ is distinguished by current partial colouring, then we colour back edges of vertices of $D$ in an arbitrary way. If not, then there exists a pair of vertices $a,b\in D$ such that each small automorphism of $D$ maps $a$ into $b$. We colour one of the back edges of $a$ with pink, the rest with blue, and the back edges of the remaining vertices in $C$ with blue.

Now we prove that the constructed colouring breaks every small automorphism of $G$. We argued already that all small automorphisms acting non-trivially $A_0=C$ are broken. Suppose that there exists the smallest $i\geq 1$ such that there exists a vertex $z$ in some component of $Z$ which is not fixed by the constructed colouring. However, every small automorphism of $G$ which acts non-trivially on $Z$ moves some vertex $a\in Z$ to some vertex $b \in Z$. Such automorphism cannot be preserved by the constructed colouring because $a$ has a pink back edge while $b$ has not. This completes the proof.
\end{proof}

\bibliographystyle{abbrv}
\bibliography{lit.bib}

\end{document}